\newtheorem{theorem}{Theorem}
\newtheorem{lemma}[theorem]{Lemma}
\newtheorem{Conjecture}[theorem]{Conjecture}
\newcommand{\dbZ}{\mathbb{Z}}
\newcommand{\calK}{{\mathcal K}}
\newcommand{\dbH}{{\mathbb H}}
\newcommand{\func}[3]{#1:#2\to#3}
\begin{document}

\title[Ranks of the algebraic $K$-Theory of hyperbolic groups]
{\bf On the ranks of the algebraic $K$-Theory of hyperbolic groups}

\author{Daniel Juan-Pineda}
\address{Centro de Ciencias Matem\'aticas,
Universidad Nacional Aut\'onoma de M\'exico, Campus Morelia, Apartado Postal 61-3 (Xangari), Morelia,
 Mi\-cho\-a\-c\'an, MEXICO 58089}
\email{daniel@matmor.unam.mx}
\thanks{We acknolwedge support from research grants from DGAPA-UNAM and CONACyT-M\'exico}

\author{Luis Jorge S\'anchez Salda\~na}
\address{Centro de Ciencias Matem\'aticas, Universidad Nacional Aut\'onoma de M\'exico, Apartado Postal 61-3 (Xangari), Morelia,
 Mi\-cho\-a\-c\'an, MEXICO 58089}

\email{luisjorge@matmor.unam.mx}
\dedicatory{Dedicated to Fico on the occassion of his 70th birthday.}


\begin{abstract}
Let $G$ be a word hyperbolic group. We prove that the algebraic
$K$-theory groups of $\dbZ [G]$, $K_n(\dbZ[G])$, have finite rank
for all $n\in \dbZ$. For a few classes of groups, we give explicit
formulas for the ranks of the algebraic $K$-theory groups of their group rings.

\end{abstract}

\maketitle

\section{Introduction and preliminaries}

Recall that the Farrell-Jones isomorphism conjecture proposes
that, for any discrete group $G$,  the algebraic $K$-theory of the
group ring $\dbZ [G]$ is determined by the algebraic $K$-theory of
the virtually cyclic subgroups of $G$ plus homological
information.

\begin{Conjecture}
\textbf{(Farrell-Jones Isomorphism Conjecture, IC)} Let $G$ be a discrete group. Then for all $n\in \mathbb{Z}$ the assembly map
\begin{equation}
\func{A_{Vcyc}}{H_n^G(\underline{\underline{E}}G;\calK)}{H_n^G(pt;\calK)\cong K_n(\mathbb{Z}[G])}
\end{equation}
 induced by the projection $\underline{\underline{E}}G\rightarrow pt$ is an isomorphism,
 where $H^G_*(-;\calK)$ is a suitable equivariant homology theory with local coefficients in $\calK$, the non-connective spectrum of algebraic $K$ theory and $\underline{\underline{E}}G$ is a model for the classifying space for actions with isotropy in the family of virtually cyclic subgroups of $G$.
\end{Conjecture}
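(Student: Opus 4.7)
The statement as worded asserts the Farrell--Jones isomorphism conjecture for an \emph{arbitrary} discrete group $G$, and this remains a central open problem in the $K$-theory of group rings; consequently no complete proof can be offered, and what follows is only the template into which every verified instance fits. Within the paper itself the statement is in fact labelled a Conjecture, and the later results are applications of known cases (notably the hyperbolic case of Bartels--L\"uck--Reich) rather than of a proof of the full conjecture.

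The plan would be to work in the Davis--L\"uck formulation, where $A_{Vcyc}$ arises as the map of $G$-equivariant homology theories induced by the projection $\underline{\underline{E}}G\to pt$ after smashing with the non-connective algebraic $K$-theory spectrum $\calK$. One then seeks a \emph{geometric reduction}: produce a contractible, finite-dimensional $G$-space $X$ carrying enough nonpositive-curvature structure (a CAT(0) metric, a flow space in the sense of Bartels--L\"uck, a boundary compactification, or a buildings-theoretic model) so that the controlled algebra machinery of Bartels--Farrell--L\"uck--Reich--Rosenthal applies. The concrete geometric input to be verified is \emph{transfer reducibility}: for every $\epsilon>0$ one constructs a $G$-equivariant map from $G$ into a simplicial complex of bounded covering dimension whose open cells have diameter less than $\epsilon$ in a metric pulled back from $X$, while point stabilizers remain virtually cyclic.

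Granted such a transfer, surjectivity of $A_{Vcyc}$ follows from the vanishing of controlled $K$-theory at small scales, and injectivity from an algebraic analogue of the B\"okstedt--Hsiang--Madsen cone/cylinder trick. To propagate the conclusion from known classes of groups toward the general statement one invokes the inheritance properties of the conjecture: stability under directed colimits, passage to subgroups, compatibility with extensions whose quotient satisfies the conjecture, and closure under free products and amalgamated products over subgroups for which it is known.

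The main obstacle, and the reason the conjecture remains open in the stated generality, is that no uniform construction of such a geometric model $X$ is known for an arbitrary discrete group. Groups lacking any form of nonpositive curvature --- infinite torsion groups produced by small-cancellation techniques, Burnside-type constructions, or certain Thompson-like groups without a CAT(0) cubulation --- currently resist every known method, and counterexamples to the closely related Baum--Connes conjecture with coefficients suggest that a proof at this level of generality would require either a genuinely new controlled-algebra input independent of geometry, or a structural theorem writing every discrete group as a colimit of groups already in the class for which $A_{Vcyc}$ is known to be an isomorphism.
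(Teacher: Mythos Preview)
Your reading is correct: the paper does not prove this statement at all. It is explicitly labelled a \emph{Conjecture}, and the paper proceeds by invoking the verified instances (hyperbolic groups via Bartels--L\"uck--Reich, CAT(0) groups via Wegner) as black boxes feeding into the rank computations. There is therefore no ``paper's own proof'' to compare against; your proposal rightly identifies the statement as open in the stated generality and gives an accurate sketch of the controlled-algebra and transfer-reducibility template through which all known cases pass.
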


This conjecture has been verified, among others, when $G$ is a
word  hyperbolic group \cite{hyperbolic}, or a $CAT(0)$-group
\cite{wegner}. Once we know the conjecture holds for a group $G$,
we can try to compute $K_n(\mathbb{Z}[G])\cong
H_n^G(\underline{\underline{E}}G)$ using an Atiyah-Hirzebruch type
spectral sequence.

In this paper we use the validity of the  Farrell-Jones conjecture
and the corresponding spectral sequence to show that the rank of
$K_n(\dbZ [G])$ is finite for all $n\in \dbZ$, where $G$ is a word
hyperbolic group. Next, we give some explicit examples of
computations of these ranks.

For hyperbolic groups Leary and Juan-Pineda \cite{jefe}, showed that
\begin{equation}
H_n^G(\underline{\underline{E}}G;\calK) \cong  H_n^G(\underline{E}G;\calK) \oplus  \bigoplus_{(V)} cok_n(V),
\end{equation}
where $\underline{E}G$ is the classifying space for the family \textit{FIN}, of finite subgroups of $G$, $(V)$ consists of one representative from each conjugacy class of maximal infinite virtually cyclic subgroup of $G$ and $cok_n(V)$ is the cokernel of the homomorphism  $H_n^V(\underline{E}V \rightarrow pt;\calK).$\\

 It is well known, see \cite[Thm. 5.9 and page  4.]{torsionnils} that the terms $cok_n{V}$ are torsion groups. This gives the following:

 \begin{lemma} \label{hyprank}
 Let $G$ be a discrete word hyperbolic group. Then for all $n\in \dbZ$
 $$rank (K_n(\dbZ [G]))= rank( H_n^G(\underline{E}G;\calK)).$$

 \end{lemma}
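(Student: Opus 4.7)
The plan is to combine the three ingredients that the introduction has already assembled and then pass to ranks. First, since $G$ is word hyperbolic, the Farrell--Jones isomorphism conjecture holds for $G$ by \cite{hyperbolic}, so the assembly map in equation (1) identifies $K_n(\dbZ[G])$ with the equivariant homology group $H_n^G(\underline{\underline{E}}G;\calK)$ for every $n\in\dbZ$. Taking ranks turns this isomorphism into the equality
\[
\mathrm{rank}\bigl(K_n(\dbZ[G])\bigr)=\mathrm{rank}\bigl(H_n^G(\underline{\underline{E}}G;\calK)\bigr).
\]

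Next, I would invoke the Leary--Juan-Pineda splitting (equation (2)) from \cite{jefe}, which expresses $H_n^G(\underline{\underline{E}}G;\calK)$ as the direct sum of $H_n^G(\underline{E}G;\calK)$ and a family of cokernel groups $cok_n(V)$, one for each conjugacy class of maximal infinite virtually cyclic subgroup $V\le G$. Since rank is additive over direct sums (even infinite ones, with the convention that it takes values in $\dbN\cup\{\infty\}$), this yields
\[
\mathrm{rank}\bigl(H_n^G(\underline{\underline{E}}G;\calK)\bigr)=\mathrm{rank}\bigl(H_n^G(\underline{E}G;\calK)\bigr)+\sum_{(V)}\mathrm{rank}\bigl(cok_n(V)\bigr).
\]

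Finally, I would use the fact recalled from \cite[Thm. 5.9]{torsionnils} that each $cok_n(V)$ is a torsion abelian group; hence $\mathrm{rank}(cok_n(V))=0$ for every conjugacy class $(V)$. Substituting this into the displayed equation above collapses the sum and leaves
\[
\mathrm{rank}\bigl(K_n(\dbZ[G])\bigr)=\mathrm{rank}\bigl(H_n^G(\underline{E}G;\calK)\bigr),
\]
which is precisely the conclusion of Lemma \ref{hyprank}.

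There is essentially no obstacle here: every nontrivial input (the validity of Farrell--Jones for hyperbolic groups, the cokernel splitting, and the torsion statement for $cok_n(V)$) has been cited or stated before the lemma, and the only step of actual mathematical content is the trivial observation that torsion groups contribute $0$ to the rank. The one point worth being careful about is that the index set $\{(V)\}$ may be infinite; but since each summand has rank zero, this causes no issue with the additivity of rank.
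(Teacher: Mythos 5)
Your proof is correct and follows exactly the route the paper intends: the validity of Farrell--Jones for hyperbolic groups, the splitting of $H_n^G(\underline{\underline{E}}G;\calK)$ into $H_n^G(\underline{E}G;\calK)$ plus the cokernel terms, and the fact that each $cok_n(V)$ is torsion and so contributes nothing to the rank. The paper leaves this deduction implicit (the lemma is stated immediately after the torsion remark with no separate proof), so your write-up is simply a careful spelling-out of the same argument.
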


This paper is, in part, complementary to \cite{vfree} where we treated the case  \textsl{lower} $K$ groups, namely $K_i()$ for $i\leq 1$. Here we treat the whole spectrum of $K$ theory and a broader class of groups.


\section{Ranks.}

In view of Lemma \ref{hyprank} the rank of the algebraic
$K$-theory groups of $\dbZ [G]$ are  determined by  the ranks of
the algebraic $K$-theory of the finite subgroups of $G$ and the
homology of $\underline{E}G$. The ranks of the algebraic $K$
groups of the group ring of a finite group are given as follows:

\begin{theorem}\label{finiterank}(\cite{jahren}, \cite{rank1}, \cite{carter1}, \cite{carter2})
Let $H$ be a finite group with $r$ distinct real irreducible
representations, $c$ of them of complex type,  and $q$ distinct
rational irreducible representations. For $n>1$ we then have

\begin{equation*}
rank(K_n(\dbZ [H])) =
\begin{cases}
r & if\ n\equiv 1\ mod\ 4,\\
c & if\ n\equiv 3\ mod\ 4,\\
0 & if\ n\ is\ even.\end{cases}
\end{equation*}
When $n\leq 1$, we have:
\begin{equation*}
rank(K_1(\dbZ [H])) = r-q,
\end{equation*}

\begin{equation*}
rank(K_0(\dbZ [H])) = 1,
\end{equation*}

\begin{equation*}
rank(K_{-1}(\dbZ [H]))< \infty \text{ and}
\end{equation*}

\begin{equation*}
K_{-n}(\dbZ [H])=0\ n>1.
\end{equation*}

\end{theorem}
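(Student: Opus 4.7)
The plan is to reduce the computation to the algebraic $K$-theory of rings of integers of number fields, where Borel's rank theorem applies, and then to match indices via the Wedderburn decomposition.

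\textbf{Step 1: Rationalization.} For $n\geq 2$, I would use the localization sequence
\begin{equation*}
\cdots \to K_n(\dbZ[H]) \to K_n(\dbQ[H]) \to \bigoplus_p K_{n-1}(\dbF_p[H]) \to \cdots
\end{equation*}
Quillen's theorem says $K_m(\dbF_p[H])$ is finite for all $m\geq 1$, so $\mathrm{rank}(K_n(\dbZ[H])) = \mathrm{rank}(K_n(\dbQ[H]))$.

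\textbf{Step 2: Wedderburn and Morita.} Decompose $\dbQ[H] \cong \prod_{i=1}^q M_{n_i}(D_i)$, where the $D_i$ are finite-dimensional division algebras over $\dbQ$ whose centers $F_i$ are number fields; this is exactly where the number $q$ of rational irreducibles enters. By Morita invariance,
\begin{equation*}
K_n(\dbQ[H]) \cong \bigoplus_{i=1}^q K_n(D_i).
\end{equation*}
Then I would cite the theorem (originally due to Suslin, in the rational form needed here) that for a central simple algebra $D$ over a number field $F$, $K_n(D)\otimes \dbQ \cong K_n(F)\otimes \dbQ$ for $n\geq 1$. A further localization step replaces $F_i$ by its ring of integers $\calO_{F_i}$ with no change in rank for $n\geq 2$.

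\textbf{Step 3: Borel's theorem and index bookkeeping.} Borel's rank theorem gives $\mathrm{rank}\,K_n(\calO_{F_i}) = r_1(F_i) + r_2(F_i)$ if $n \equiv 1 \pmod 4$, equals $r_2(F_i)$ if $n \equiv 3 \pmod 4$, and vanishes for $n$ even and $\geq 2$. The combinatorial heart of the proof, and what I expect to be the main obstacle, is matching the total real-place count $\sum_i r_1(F_i)$ and complex-place count $\sum_i r_2(F_i)$ against the invariants $r$ and $c$ of $H$. For this I would expand $\dbQ[H]\otimes_\dbQ \dbR \cong \dbR[H] \cong \prod_j M_{m_j}(E_j)$ with $E_j \in \{\dbR,\dbC,\dbH\}$: each real place of each $F_i$ contributes one factor of type $\dbR$ or $\dbH$ (depending on whether the Brauer class of $D_i$ is split at that place), while each complex place contributes one factor of type $\dbC$. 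Summing yields $r - c = \sum_i r_1(F_i)$ and $c = \sum_i r_2(F_i)$, giving the claimed formulas in the ranges $n\equiv 1,3 \pmod 4$ and vanishing for even $n\geq 2$.

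\textbf{Step 4: The low-dimensional cases.} For $n=1$, the Dirichlet-type unit theorem applied to the same orders produces the rank $r-q$ (the $-q$ accounting for the single copy of $\dbZ$ lost per factor). For $n=0$, Swan's theorem that $\widetilde K_0(\dbZ[H])$ is finite gives rank $1$. For $n\leq -1$, I would quote Carter's direct computation: $K_{-1}(\dbZ[H])$ is finitely generated and $K_{-n}(\dbZ[H])=0$ for $n>1$. Each of these is a separate classical input, so the proof at this stage really becomes a compilation of the cited references rather than a unified argument.
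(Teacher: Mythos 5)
The paper itself offers no proof of this theorem; it is quoted as a compilation of known results from the four cited references (the Borel--Suslin rank computation for $n>1$, Bass for $K_1$, Swan for $K_0$, Carter for negative degrees). Your outline reproduces the architecture of the literature's argument, and Steps 2--4 are essentially right: in particular the place-counting in Step 3, namely $r-c=\sum_i r_1(F_i)$ and $c=\sum_i r_2(F_i)$ via the comparison of the Wedderburn decompositions of $\mathbb{Q}[H]$ and $\dbR[H]$, is exactly the combinatorial heart, and you carry it out correctly; combined with Borel it yields $r$, $c$, $0$ in the three residue classes, and the Dirichlet count $\sum_i(r_1(F_i)+r_2(F_i)-1)=r-q$ gives the $K_1$ statement.

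The genuine gap is Step 1. The localization sequence you write, with third term $\bigoplus_p K_{n-1}(\dbF_p[H])$, is the sequence for a \emph{regular} ring: it holds for $\dbZ$, for $\calO_{F_i}$, and for maximal orders (which are hereditary), but its identification of the torsion term rests on d\'evissage, which fails for $\dbZ[H]$ because $\dbZ[H]$ has infinite global dimension for every nontrivial finite $H$. As written your sequence is a $G$-theory statement, and the comparison $K_n(\dbZ[H])\to G_n(\dbZ[H])$ is not a rational isomorphism for free. The standard repair --- and the one nontrivial input your sketch omits --- is to first pass to a maximal order $\Gamma\supseteq\dbZ[H]$ in $\mathbb{Q}[H]$ and prove that $K_n(\dbZ[H])\to K_n(\Gamma)$ is a rational isomorphism for $n\geq 1$; this uses the conductor square together with the finiteness of the $K$-groups of finite rings in positive degrees (Kuku's theorems on orders; note that finiteness of $K_m(\dbF_p[H])$ for $p$ dividing $|H|$ is also Kuku rather than literally Quillen, since that ring is not semisimple). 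Once the reduction to $\Gamma$ is made, your Steps 2 and 3 apply verbatim, and the outline then matches the proofs in the cited sources.
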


Note that $r$ is equal to the number of real conjugacy classes of $H$, that is, classes of the form $C(h)=\{ghg^{-1}, gh^{-1}g^{-1}| g\in H\}$, $c$ is equal to the number of real conjugacy classes such that $C(h)\neq \{hgh^{-1} | h\in H \}$, and $q$ is the number of conjugacy classes of cyclic subgroups of $H$, see \cite{serre}. \\

To compute the equivariant homology groups
$H_*^G(\underline{E}G;\calK)$ we may use an Atiyah-Hirzebruch type
spectral  sequence. Let  $C_n$ denote the set of $n$-cells of the
space $\underline{B}G=\underline{E}G / G$, then the first page of
our spectral sequence is given by
$$
\xymatrix{  &  \vdots& \vdots &  \\
\cdots &\displaystyle \bigoplus_{\sigma^{p}\in C_{n}} K_q(\dbZ [G_{\sigma^{p}}])\ar[l] &  \displaystyle\bigoplus_{\sigma^{p+1}\in C_{n+1}} K_q(\dbZ [G_{\sigma^{p+1}}])\ar[l] & \cdots\ar[l]
\\ \cdots & \displaystyle \bigoplus_{\sigma^{p}\in C_{n}} K_{q-1}(\dbZ [G_{\sigma^{p}}])\ar[l] &  \displaystyle\bigoplus_{\sigma^{p+1}\in C_{n+1}} K_{q-1}(\dbZ [G_{\sigma^{p+1}}])\ar[l] & \cdots \ar[l] \\  &  \vdots & \vdots &  }
$$
where $G_{\sigma}$ denotes the stabilizer of a pre-image $\sigma
'\in \underline{E}G$ of $\sigma \in \underline{B}G$, and  the
homomorphisms in the chain complex are induced by the natural
inclusions (up to conjugacy). Note that $G_{\sigma}$ is always a
finite group, hence we can apply Theorem \ref{finiterank} to every
group appearing in our spectral sequence. As a consequence we may
identify the second page of the Atiyah-Hirzebruch spectra sequence
as
$$
E^2_{p,q}= \dbH_2(\underline{B}G;\{\calK_q({\dbZ [G_\sigma]})\}),
$$
where the above is a homology theory with local coefficients given
by the algebraic $K$ groups  of the group rings $\dbZ[G_\sigma]$
for all the finite isotropy groups $G_\sigma$.

\begin{theorem}\label{main}
Let $G$ be an hyperbolic group. Then $rank(K_n(\dbZ [G]))$ is
finite for all $n\in \dbZ$.
\end{theorem}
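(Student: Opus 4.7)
The plan is to combine Lemma \ref{hyprank} with the Atiyah--Hirzebruch spectral sequence introduced above, so that the question becomes a finiteness statement about a spectral sequence with finite-rank input that collapses on a finite page. By Lemma \ref{hyprank}, it suffices to prove that $H_n^G(\underline{E}G;\calK)$ has finite rank for every $n\in\dbZ$. For a word hyperbolic group $G$, the Rips complex provides a finite-dimensional, cocompact model for $\underline{E}G$, so the orbit space $\underline{B}G=\underline{E}G/G$ is a finite CW complex; in particular, each cell set $C_p$ is finite, only finitely many $p$ contribute, and there are only finitely many conjugacy classes of finite subgroups that can occur as stabilizers $G_\sigma$.

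With this input in hand, I would examine the $E^1$ (equivalently $E^2$) page of the spectral sequence. For each cell $\sigma^p$ the stabilizer $G_\sigma$ is finite, so Theorem \ref{finiterank} applies: $K_q(\dbZ[G_\sigma])$ has finite rank for every $q\in\dbZ$, and in fact vanishes rationally for $q\leq-2$. Hence
\begin{equation*}
E^1_{p,q}\;=\;\bigoplus_{\sigma^p\in C_p}K_q(\dbZ[G_\sigma])
\end{equation*}
is a direct sum of finitely many finite-rank groups, so $E^1_{p,q}$, and therefore $E^2_{p,q}$, has finite rank.

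Next I would argue that the spectral sequence converges strongly to $H_{p+q}^G(\underline{E}G;\calK)$ and that in every total degree only finitely many diagonal entries can be nonzero. Horizontal boundedness is automatic because $0\leq p\leq\dim\underline{E}G<\infty$; vertical boundedness from below follows from $K_q(\dbZ[G_\sigma])=0$ for $q\leq -2$ given by Theorem \ref{finiterank}. Since each differential can only decrease the rank of a subquotient of a finite-rank group, every $E^r_{p,q}$ and hence $E^\infty_{p,q}$ has finite rank. Because $H_n^G(\underline{E}G;\calK)$ admits a finite filtration whose associated graded is $\bigoplus_{p+q=n}E^\infty_{p,q}$, a finite sum of finite-rank abelian groups, it too has finite rank. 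Combined with Lemma \ref{hyprank}, this gives $\mathrm{rank}(K_n(\dbZ[G]))<\infty$ for all $n\in\dbZ$.

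The main point that needs care is the simultaneous finiteness along the $q$-axis: for a fixed total degree $n$ one must know that $K_{n-p}(\dbZ[G_\sigma])$ vanishes rationally once $n-p$ is sufficiently negative, so that only finitely many cells of $\underline{B}G$ contribute rationally. This is exactly the vanishing $K_{-n}(\dbZ[H])=0$ for $n>1$ in Theorem \ref{finiterank}, together with the finite bound on $\dim\underline{E}G$; once both are in place the argument is essentially a bookkeeping of finitely many finite-rank groups along each diagonal.
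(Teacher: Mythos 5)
Your proposal is correct and follows essentially the same route as the paper: a finite (cocompact, finite-dimensional) model for $\underline{E}G$ confines the spectral sequence to a vertical strip with finitely many finite stabilizers per column, Theorem \ref{finiterank} gives finite rank on the $E^1$-page, and $E^\infty_{p,q}$ inherits finite rank as a subquotient, so each diagonal contributes a finite sum; Lemma \ref{hyprank} then transfers this to $K_n(\dbZ[G])$. Your extra remark about vanishing of negative $K$-theory is welcome but not strictly needed, since the bound $0\leq p\leq\dim\underline{B}G$ already leaves only finitely many terms on each diagonal $p+q=n$.
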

\begin{proof}
It is known that for  $G$ word hyperbolic, there exists a finite
model  for $\underline{E}G$, i.e., such that $\underline{B}G$ is
compact, see for example \cite{eghyperbolic}. Take  this finite
model for $\underline{E}G$. Hence the only possible  non-zero
terms in the $n$th page of our spectral sequence $E^n_{p,q}$ are
those terms with $0\leq p \leq m$, $m=dim \underline{B}G$, that
is, they are contained in a vertical strip for all $n \in
\{1,2,3,...\} \cup \{ \infty \}$. Now, since $E^{\infty}_{p,q}$
has finite rank because it is the quotient of a subgroup of the
abelian group $E^1_{p,q}$ and
$$
rank(K_n(\dbZ [G]))= \sum_{p+q=n} rank(E^{\infty}_{p,q})
$$
the proof follows by Theorem \ref{finiterank} and the compactness
of $\underline{B}G$.
\end{proof}


Note that using \cite[Thm. 5.11]{torsionnils} Lemma \ref{hyprank}
is valid for every group satisfying the Farrell-Jones conjecture,
hence following the proof of Theorem \ref{main} we have:


\begin{theorem}
Let $G$ be a group that admits a finite model for $\underline{B}G$ and such that satisfies the Farrell-Jones conjecture. Then $rank(K_n(\dbZ [G]))$ is finite for all $n\in \dbZ$.
\end{theorem}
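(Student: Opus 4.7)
The plan is to imitate the proof of Theorem \ref{main} verbatim, once the two hypotheses (Farrell-Jones and finite model for $\underline{B}G$) have been converted into the two inputs used there. By the remark preceding the statement, Lemma \ref{hyprank} is available for every group satisfying the Farrell-Jones conjecture, so I may start from the rank identity
$$rank(K_n(\dbZ[G])) = rank(H_n^G(\underline{E}G; \calK)),$$
and it suffices to bound the right-hand side.

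Next, I would feed the given finite model for $\underline{B}G$, say of dimension $m$, into the equivariant Atiyah-Hirzebruch spectral sequence displayed above. Finiteness of the model guarantees that on the $E^1$-page only finitely many cells contribute in each column and that $E^r_{p,q} = 0$ outside the vertical strip $0 \le p \le m$ for every $r \in \{1,2,\dots\} \cup \{\infty\}$. Because each isotropy group $G_\sigma$ of a cell of $\underline{B}G$ is finite, Theorem \ref{finiterank} applies to every summand $K_q(\dbZ[G_\sigma])$ of $E^1_{p,q}$ and yields $rank(E^1_{p,q}) < \infty$. Since ranks are monotone under subquotients of abelian groups, $rank(E^\infty_{p,q}) \le rank(E^1_{p,q}) < \infty$ as well. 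With only finitely many columns contributing to the antidiagonal $p+q=n$, the identity
$$rank\bigl(H_n^G(\underline{E}G; \calK)\bigr) = \sum_{p+q=n} rank(E^\infty_{p,q})$$
then exhibits a finite sum of finite numbers, which completes the argument.

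The principal subtlety lies entirely in the first step: justifying that Lemma \ref{hyprank}, whose stated proof invokes the Leary--Juan-Pineda splitting that was originally established in the hyperbolic setting, survives in this wider generality. The point is that the splitting of $H_n^G(\underline{\underline{E}}G; \calK)$ into a piece coming from $\underline{E}G$ and a direct sum of cokernels $cok_n(V)$ is formal in the presence of the Farrell-Jones isomorphism, while the torsion statement for the $cok_n(V)$ reduces to the purely virtually-cyclic assertion \cite[Thm.~5.11]{torsionnils}, which is insensitive to hyperbolicity of the ambient group. Once this is accepted, no further obstacle remains, because the spectral sequence bookkeeping is formally identical to the hyperbolic case and uses the finite model for $\underline{B}G$ in exactly the same way.
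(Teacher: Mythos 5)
Your proposal is correct and follows essentially the same route as the paper: the authors likewise invoke \cite[Thm.~5.11]{torsionnils} to extend Lemma \ref{hyprank} to all groups satisfying the Farrell--Jones conjecture and then repeat the spectral-sequence argument of Theorem \ref{main} verbatim, using the finite model for $\underline{B}G$ to confine the nonzero columns to a finite strip and Theorem \ref{finiterank} to bound the ranks on the $E^1$-page. Your additional discussion of why the splitting and the torsion statement survive beyond the hyperbolic case is a welcome elaboration of what the paper leaves as a one-line remark.
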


This last Theorem is more general and applies, for instance, to  the groups that appear in
\cite{LO09}.


\section{Examples.}

In this section we give some explicit computations of $rank(K_n(\dbZ [G]))$.
\subsection{Finitely generated free groups.}

Let $F_n$ be the free group on $n$ generators, $n\in \dbZ$. Since $F_n$ is torsion free $\underline{E}G=EG$, on the other hand we know that the Cayley graph of $G$ is a model for $EG$, and $BG$ with this model is a wedge of circles. Hence there is one $0$-cell  and $n$ $1$-cells. Moreover, $G_\sigma=1$ for all cells, hence
$$
E^2_{pq}=H_p(\vee_nS^1;\{K_q\})=H_p(\vee_nS^1;K_q(\dbZ)).
$$
This gives
$$
H_p(\underline{B}G;K_q(\dbZ))=\begin{cases}
   K_q(\dbZ)& \text{ for } p=0,\\
   \oplus_n(K_q(\dbZ))& \text{ for } p=1,\\
   0& \text{ for } p>1 \text{ or } q\leq -1.
   \end{cases}
$$

The graph associated to the free group on $n$ generators, the labels are the coefficients of the corresponding cell, these have all trivial
stabilizers.
\begin{center}
\begin{tikzpicture}[scale=2.5]
\draw[line width=.5mm,rotate=60] (-.65,.35) ellipse (15pt and 23pt);
\draw[line width=.5mm,rotate=-60] (.65,.35) ellipse (15pt and 23pt);
\draw (-1.2,.10) node[above left] {$K_*(\mathbb{Z})$};
\draw (1.2,.10) node[above right] {$K_*(\mathbb{Z})$};
\draw[line width=.5mm,rotate=30] (-.4,0) ellipse (15pt and 23pt);
\draw (-.65,.55) node[above left] {$K_*(\mathbb{Z})$};
\draw[line width=.5mm,rotate=-30] (.4,0) ellipse (15pt and 23pt);
\draw[line width=.5mm, loosely dashed] (-.65,.55).. controls (-.35,.85).. (.65,.55) node[above right] {$K_*(\mathbb{Z})$};
\fill (0,-.99) circle (.7mm) node[below ] {$K_*(\mathbb{Z})$};
\end{tikzpicture}
\end{center}

Notice that all the differentials vanish, hence this spectral sequence collapses at this stage giving
$$
rank(K_n(\dbZ [F_n]))= rank(K_n(\dbZ))+n\cdot rank(K_{n-1}(\dbZ)).
$$
Applying Theorem \ref{finiterank} to the trivial group we have that
$$
rank(K_n(\dbZ)) =
\begin{cases}
1 & if\ n\equiv 1\ mod\ 4\ \text{ and }\ n>1,\ or\ n=0,\\
0 & \text{ otherwise }.\\
\end{cases}
$$
it follows that
$$
rank(K_n(\dbZ [F_n])) =
\begin{cases}
1 & if\ n\equiv 1\ mod\ 4\ \text{ and }\ n>1,\ or\ n=0,\\
n & if\ n\equiv 2\ mod\ 4\ \text{ and }\ n>2,\ or\ n=1,\\
0 & \text{ otherwise }.
 \end{cases}
$$
\subsection{Free products of finite groups}

Let $G_1$ and $G_2$ be finite groups, and let  $G=G_1*G_2$ be their free product. We can find a one-dimensional model for $\underline{E}G$ such that $\underline{B}G$ is a closed interval with trivial isotropy in the edge and isotropy at the vertices $G_1$ and $G_2$ (\cite{trees}:
\begin{center}
\begin{tikzpicture}[scale=2.5]
 \fill (-1,0) circle (.7mm) node[above left] {$K_*(\mathbb{Z}[G_1])$};
\draw[line width=.5mm] (-1,0)-- (1,0);
\fill (1,0) circle (.7mm) node[above right ] {$K_*(\mathbb{Z}[G_2])$};
\node (0,0) [above] {$K_*(\mathbb{Z})$};
\end{tikzpicture}
\end{center}

 Once again our spectral sequence collapses at the second page giving
$$
rank(K_{n}(\dbZ [G]))=rank (K_{n}(\dbZ [G_1]))+ rank (K_{n}(\dbZ [G_2])) - rank (K_{n-1}(\dbZ ))
$$ and
$$
rank(K_n(\dbZ [G]))=
\begin{cases}
1 & n=0,\\
r_1+r_2-q_1-q_2 & i=1,\\
r_1+r_2-1 & if\ n\equiv 1\ mod\ 4,\ n>1,\\
c_1+c_2 & if\ n\equiv 3\ mod\ 4,\ n>1,\\
0 & \text{ otherwise }.
\end{cases}
$$
where $r_i$ is the number of distinct real irreducible representations of $G_i$, $i=1,2$; $c_i$ is the number of distinct real irreducible representations of complex type of $G_i$, $i=1,2$; and $q_i$ is the number of distinct rational irreducible representations of $G_i$, $i=1,2$.


\subsection{\bm{$PSL_2(\dbZ)$}}

This is a particular case of the previous example since $PSL_2(\dbZ)\cong \dbZ_2 * \dbZ_3$. Using the notation from above, we set $G_1=\dbZ [\dbZ_2]$ and $G_2=\dbZ[\dbZ_3]$, hence $r_1=2$, $r_2=2$, $c_1=0$, $c_2=1$, $q_1=2$, $q_2=2$ and
$$
rank(K_n(\dbZ [PSL_2(\dbZ)]))=
\begin{cases}
0& \text{ if }\ n=-1,\\
1 & n=0,\\
0 & n=1,\\
3 & \text{ if }\ n\equiv 1\ mod\ 4,\ n>1,\\
1 & \text{ if }\ n\equiv 3\ mod\ 4,\ n>1,\\
0 & \text{ otherwise }.\\
\end{cases}
$$

\subsection{The fundamental group of a closed orientable aspherical surface.}
Let $S_g$ be the orientable closed surface of genus $g>1$. Since the universal covering of $S_g$ is contractible we have that $S_g$ is a model for $B\pi_1(S_g)$. Furthermore,
 $S_g$ is a model for $\underline{B}\pi_1(S_g)$ as well. Moreover, these groups are hyperbolic as $S_g$ is a compact surface that admits a metric with constant curvature $-1$.
Using the classical construction of $S_g$ as the quotient of a $4g$-agon we can give $S_g$ a CW-structure consisting of one $0$-cell, $2g$ 1-cells, and one 2-cell and they all have trivial isotropy. Hence the second term of the  Atiyah-Hirzebruch spectral sequence has constant coefficients the $K$-theory of the integers:
$$
H_p(S_g; K_q(\dbZ))=\begin{cases}
K_q(\dbZ) \text{ for } p=0,\\
\displaystyle\bigoplus_{2g} K_q(\dbZ)& \text{ for } p=1,\\
K_q(\dbZ)& \text{ for } p=2,\\
0& \text{ for } p>1 \text{ or } q<0.
\end{cases}
$$
Once again all differentials are trivial and our spectral sequence collapses. This gives
$$
rank(K_n(\dbZ [\pi_1(S_g)])=
\begin{cases}
1 & n=0,2\ \text{ or } \ n\equiv 1,3\ mod\ 4\ n>1,\\
2g & \text{ if }\ n=1\ \text{ or }\ n\equiv 2\ mod\ 4,\ n>1,\\
0 & \text{ otherwise }.\\
\end{cases}
$$


\subsection{Finitely generated virtually free groups}
Let $G$ be a finitely generated virtually free group,that is $G$ has a finitely generated free subgroup of finite index. As finitely generated
free groups are hyperbolic and a group with a $\delta$-hyperbolic subgroup of finte index is hyperbolic, it follows that $G$ is $\delta$-hyperbolic as well.
 By Bass-Serre theory
\cite{trees} and by the work in \cite{vfree}, one can find a tree $T$ on which $G$ acts with finite stabilizers, that is $T$ is a model for $\underline{E}G$. Let $\mathbf{E}$ and $\mathbf{V}$ be the set of edges and vertices of the graph of groups for $G$ determined by  $T$, this part is developed in \cite{vfree}[section2.2]. In order to calculate the second page of our spectral sequence, observe that $T$ has only cells of dimension $0$ and $1$ hence our page is
$$
E^2_{p,q}=\begin{cases}
         coker\left (\displaystyle\bigoplus_{e\in \mathbf{E}}K_q(\dbZ[G_e])\to \bigoplus_{v\in\mathbf{V}}K_q(\dbZ[G_v])\right ),& \text{ for } p=0,\\
         ker\left (\displaystyle\bigoplus_{e\in \mathbf{E}}K_q(\dbZ[G_e])\to \bigoplus_{v\in\mathbf{V}}K_q(\dbZ[G_v])\right ),& \text{ for } p=1,\\
         0& \text{ otherwise }.
\end{cases}
$$
It follows that the differentials vanish and our spectral sequence collapses at this page hence
\begin{eqnarray*}
H_n^G(\underline{E}G; \{\calK_q\})=&coker\left (\displaystyle\bigoplus_{e\in \mathbf{E}}K_n(\dbZ[G_e])\to \bigoplus_{v\in\mathbf{V}}K_n(\dbZ[G_v])\right )\\
&\bigoplus\quad
         ker\left (\displaystyle\bigoplus_{e\in \mathbf{E}}K_{n-1}(\dbZ[G_e])\to \bigoplus_{v\in\mathbf{V}}K_{n-1}(\dbZ[G_v])\right).
         \end{eqnarray*}
In order to simplify the notation, let us define
\begin{eqnarray*}
E_n=& \displaystyle\bigoplus_{e\in \mathbf{E}}K_n(\dbZ[G_e]),\\
V_n=&\displaystyle\bigoplus_{v\in\mathbf{V}}K_n(\dbZ[G_v]),\\
Ker_n=&ker(E_n\to V_n) \text{ and }\\
Cok_n=&coker(E_n\to V_n).
\end{eqnarray*}
In this way we have that
$$
H_n^G(\underline{E}G; \{\calK_q\})=Cok_n\oplus Ker_{n-1}.
$$
These last groups depend on the graph structure of our tree with
the stabilizers of the action, which are all finite.

\subsection{\bm{$G=F_n\rtimes S_n$}}
This example is worked out in detail in \cite{vfree}[\S 3] for
other purposes. Let $G=F_n\rtimes S_n$ with the symmetric group
$S_n$ on $n$ letters, acting on the free group on $n$ generators
by permuting the generators. The graph of groups is a single loop
with vertex group $S_{n-1}$ and edge group $S_n$. In this case the
morphisms
$$
E_i\to V_i
$$
are all zero, it follows that
$$
H_i^G(\underline{E}G; \{\calK_q\})=K_i(\dbZ[S_n])\oplus K_{i-1}(\dbZ[S_{n-1}]).
$$

It is well known that the conjugation class of an element $x\in
S_n$ is determined by its cyclic decomposition, since $x$ and
$x^{-1}$ have the same cyclic decomposition we have that they
belong to the same conjugacy class. Hence the number of real
conjugacy classes of $S_n$ is equal to $p(n)$, the number of
partitions of $n$, and the number of real conjugacy classes of
complex type is zero. Finally if two elements on $S_n$ determine
the same cyclic subgroup then they are conjugate, this implies
that the number of conjugacy classes of cyclic subgroups of $S_n$
is equal to $p(n)$.

$$
rank(K_i(\dbZ S_n))=
\begin{cases}
p(n) & i\equiv 1\ mod\ 4\ i>1,\\
1 & i=0,\\
0 & \text{ otherwise, } \\
\end{cases}
$$

and

$$
rank(K_i(\dbZ G))=
\begin{cases}
p(n) & i\equiv 1\ mod\ 4\ i>1,\\
p(n-1) & i\equiv 2\ mod\ 4\ i>1,\\
1 & i=0,\ 1,\\
0 & \text{ otherwise. } \\
\end{cases}
$$

\bibliographystyle{alpha}
\bibliography{myblib}
\end{document}